\newtheorem{theorem}{Theorem}[section]
\newtheorem{corollary}[theorem]{Corollary}
\newtheorem*{maintheorem}{Theorem}
\newcommand{\supp}{\operatorname{supp}}
\begin{document}

\title{\large{\textbf{CHARACTERISTIC FLOWS ON SIGNED GRAPHS\\
                      AND SHORT CIRCUIT COVERS}}}

\author{
Edita M\'a\v cajov\' a
and Martin \v{S}koviera\\[3mm]
Department of Computer Science\\
Faculty of Mathematics, Physics and Informatics\\
Comenius University\\
842 48 Bratislava, Slovakia\\[2mm]
{\small\tt macajova@dcs.fmph.uniba.sk}\\[-1mm]
{\small\tt skoviera@dcs.fmph.uniba.sk}}

\date{}

\maketitle

\begin{abstract}
We generalise to signed graphs a classical result of Tutte
[Canad. J. Math. 8 (1956), 13--28] stating that every integer
flow can be expressed as a sum of characteristic flows of
circuits. In our generalisation, the r\^ole of circuits is
taken over by signed circuits of a signed graph which occur in
two types -- either balanced circuits or pairs of disjoint
unbalanced circuits connected with a path intersecting them
only at its ends. As an application of this result we show that
a signed graph $G$ admitting a nowhere-zero $k$-flow has a
covering with signed circuits of total length at most
$2(k-1)|E(G)|$.
\end{abstract}

\section{Introduction}
It is well known that every integer flow on a graph can be
expressed as a sum of characteristic flows of circuits. By the
characteristic flow $\chi_C$ of a circuit $C$ in a graph $G$ we
mean the flow that takes values $+1$ or $-1$ on $C$ and value
$0$ anywhere else in $G$. One way of seeing this fact is to fix
an orientation of $G$, take an arbitrary spanning tree $T$ of
$G$, and express the given flow $\phi$ as
$$\phi=\sum_{x\in E(G)-E(T)}\phi(x)\cdot\chi_{T(e)}$$
where $x$ is a cotree edge, $T(x)\subseteq T+x$ is the
fundamental cycle corresponding to $x$, and the sum extends
over all cotree edges of $G$. By choosing appropriate
orientations for the cotree edges one can achieve that the
coefficients $\phi(x)$ in this sum are all non-negative. Note
that this choice induces an orientation of each fundamental
cycle $T(x)$ of $G$ and may cause that an edge $t$ of $T$
belonging to two fundamental circuits $T(y)$ and $T(z)$
receives two opposite orientations from them; equivalently, for
the same direction of $t$ one would have $\chi_{T(y)}(t)=+1$
and $\chi_{T(z)}(t)=-1$. It turns out, however, that
incompatibilities such as this one can always be avoided by
choosing the set of circuits properly. Indeed, in 1956 Tutte
\cite[6.2]{T} proved that the decomposition of $\phi$ into
characteristic flows can always be performed in such a way that
all the circuits occurring in the expression are directed
circuits with respect to a suitable fixed orientation of $G$.
The aim of this paper is to establish a similar result for
flows on signed graphs using the concept of a signed circuit
and its characteristic flow, which we explain in Section~2 and
Section~3, respectively.

Our main result reads as follows.

\begin{maintheorem}
For every integer flow  $\phi$ on a signed graph $G$ there
exists a set $\mathcal{C}$ of signed circuits of $G$ which are
consistently directed with respect to a suitable orientation of
$G$ and positive integers $n_C$ such that
$$\phi=\sum_{C\in\mathcal{C}}n_C\chi_C.$$
\end{maintheorem}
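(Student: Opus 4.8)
The plan is to follow the classical strategy --- fix an orientation making $\phi$ nonnegative and repeatedly split off characteristic flows of directed circuits --- while upgrading the circuits involved to signed circuits. Concretely, I would first normalise: orienting each edge $e$ in whichever of its two directions makes $\phi(e)\ge 0$ (reversing an edge, i.e.\ flipping both of its half-edges, negates the value of $\phi$ on it, and is available for negative edges too) produces an orientation $\omega$ of $G$ with $\phi\ge 0$. Since the theorem asks only for \emph{some} suitable orientation, it now suffices to decompose $\phi$ into characteristic flows of signed circuits consistently directed with respect to this fixed $\omega$, and I will induct on $\|\phi\|:=\sum_{e\in E(G)}\phi(e)$, the case $\phi=0$ being trivial.

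The core is a peeling step: \emph{if $\phi\ge 0$ and $\phi\neq 0$, there is a signed circuit $C$, consistently directed with respect to $\omega$, with $\chi_C\le\phi$ edgewise}. Granting this, $\phi-\chi_C$ is again a nonnegative integer flow of strictly smaller weight, and induction, followed by collecting repeated circuits into multiplicities $n_C$, proves the theorem. To find $C$ I grow a flow-guided directed walk inside $\supp(\phi)$: keeping a residual $\rho$ initialised to $\phi$, I build a walk $v_0,e_1,v_1,e_2,\dots$ one edge at a time, decreasing $\rho$ by $1$ on each traversed edge and never letting $\rho$ drop below $0$, where ``directed'' means that at every internal vertex the incoming and the outgoing half-edge receive opposite orientations. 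Conservation of $\phi$ makes the walk unstoppable: once the walk has entered $v_i$ along $e_i$ and $\rho(e_i)$ has been decreased, the residual imbalance $\sum_{e\ni v_i}\varepsilon(v_i,e)\rho(e)$ at $v_i$ (with $\varepsilon(v_i,e)=\pm1$ recording the orientation of $e$ at $v_i$) equals $\pm1$, so some edge at $v_i$ of the orientation forced for $e_{i+1}$ --- the one opposite to $e_i$ at $v_i$ --- still carries residual at least $1$ and may be used. As there are only finitely many states (vertex, incoming polarity), some state must recur; letting $W$ be the portion of the walk between the first two occurrences of the earliest recurring state, $W$ is a closed directed walk visiting each state at most once. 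Being directed, $W$ has a nonnegative characteristic flow $\chi_W$; since on each edge $\chi_W$ is at most the number of times that edge is traversed, which is at most $\phi$ there by construction, we get $0\le\chi_W\le\phi$, so in particular $W$ is consistently directed with respect to $\omega$.

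It remains to recognise $W$ as a signed circuit. Lifting $W$ to the canonical double cover $\widetilde G$ (a positive edge lifts within the two sheets, a negative edge across them), the state bookkeeping of the walk becomes plain vertex bookkeeping in $\widetilde G$, so $W$ lifts to a closed directed walk meeting each vertex of $\widetilde G$ at most once, that is, to a simple directed cycle $\widetilde W$. Its projection to $G$ is, according to whether $\widetilde W$ is disjoint from its mirror image or not, either a balanced circuit traversed once, or two unbalanced circuits joined by a path meeting them only at its ends (the path possibly reduced to a single common vertex), with each circuit traversed once and the path traversed twice; in these two cases $\chi_W$ is $+1$ on the balanced circuit, respectively $+1$ on each unbalanced circuit and $+2$ along the connecting path. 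Hence $W$ is a signed circuit of one of the two types allowed by the statement, $\chi_W$ is its characteristic flow, and the peeling step --- and with it the theorem --- follows.

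I expect this last paragraph to be the genuine obstacle: one has to pin down exactly which subgraphs of $G$ occur as projections of simple cycles of $\widetilde G$, verify that they are precisely the signed circuits, and check that the induced traversal realises the characteristic flow with the asserted values $+1$ and $+2$; this is the structural content about signed circuits and their characteristic flows that Sections~2 and~3 must supply, and it is what produces the two-type dichotomy in the statement. A minor, purely bookkeeping, nuisance is keeping the three layers of signs --- edge signatures, orientations, and the walk's polarities --- mutually consistent throughout the walk, most delicately when the walk runs twice along an edge of the connecting path of a barbell, which it can do only after an intervening reversal of polarity forced by traversing an unbalanced circuit.
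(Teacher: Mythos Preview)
Your peeling strategy has a genuine gap, and the paper's own Remark (immediately following Theorem~\ref{thm:decomp}) furnishes the counterexample. Take $G$ to be two vertices $u,v$ joined by two positive parallel edges $e_1,e_2$, with a negative loop $\ell_u$ at $u$ and a negative loop $\ell_v$ at $v$, and let $\phi$ be the nowhere-zero $2$-flow with $|\phi|\equiv 1$. In the positive orientation your directed walk is forced (up to the symmetry $e_1\leftrightarrow e_2$): it traverses $\ell_u, e_1, \ell_v, e_2$ once each and closes. The lift $\widetilde W$ is a simple $4$-cycle in the double cover that shares all four vertices with its mirror $\sigma(\widetilde W)$ but no edges; its projection is all of $G$, which is \emph{not} a signed circuit (two unbalanced loops joined by a balanced $2$-cycle, not by a path). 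So the dichotomy you assert in the last paragraph---balanced circuit versus barbell---is false: simple directed cycles in $\widetilde G$ can project to strictly larger subgraphs.

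The failure is not a matter of missing bookkeeping; it reflects a mismatch with the paper's definition of $\chi_C$. You compute $\chi_W$ with values $1$ on the unbalanced circuits and $2$ on the connecting path, i.e.\ $\chi_W=2\chi_C$ in the paper's normalisation, so your decomposition would force every barbell coefficient $n_C$ to be even. But in the example above the only decomposition is $\phi=\chi_{B_1}+\chi_{B_2}$ with two barbells each carrying coefficient~$1$, which is exactly why the paper insists on the half-integral values $\chi_C\in\{0,\tfrac12,1\}$ and calls them ``really necessary''. Correspondingly, the paper's proof cannot be a one-shot peel: it first strips all consistently directed balanced circuits, then enters a second phase in which each subtraction of a barbell's $\chi_U$ introduces fractional values on two edge-disjoint unbalanced circuits, and a four-case analysis is needed to locate the next barbell so that the fractional locus remains controlled (either empty or again two edge-disjoint unbalanced circuits). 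Your double-cover idea is elegant for locating closed directed walks, but to reach the theorem you would still have to decompose each such walk into genuine signed circuits with half-integral characteristic flows, which is the actual content of the result.
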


Rather than following Tutte's approach based on the use of
chain groups we prove our theorem directly by employing a
purely graph theoretical approach that inspects circuits and
their signed analogues. Of course, the special case of balanced
signed graphs will yield a proof of the original Tutte's
result.

Our paper is divided into four sections. Section~2 reviews the
basic concepts of signed graph theory with the emphasis on the
notions related to flows. In Section~3 we introduce the concept
of a characteristic flow of a signed circuit and prove our main
result. In the final section we apply the main result to the
study of the signed analogue of the shortest circuit cover
problem recently initiated in \cite{MRRS}.

\section{Signed graphs and flows}

A \textit{signed graph} is a graph in which each edge is
labelled with a sign, $+$ or $-$. An \textit{orientation}, or a
\textit{bidirection}, of a signed graph is obtained by dividing
each edge into two \textit{half-edges} and by assigning
individual orientations to them subject to the following
compatibility rule: a positive edge has one half-edge directed
from and the other half-edge directed to its end-vertex, while
a negative edge has both half-edges directed either towards or
from their respective end-vertices. Thus each edge,
irrespectively of its sign, has two possible orientations which
are opposite to each other.

Given an abelian group $A$, an \textit{$A$-flow} on a signed
graph $G$ is an assignment of an orientation and a value from
$A$ to each edge in such a way that for each vertex of $G$ the
sum of incoming values equals the sum of outgoing values
(\textit{Kirchhoff's law}). If $0\in A$ is not used as a flow
value, the flow is said to be \textit{nowhere-zero}. The
concept of a nowhere-zero $A$-flow is particularly interesting
when $A$ is the group of integers. A major problem is to
determine, for a given signed graph $G$, the smallest integer
$k\ge 2$ such that $G$ has an integer flow with values in the
set $\{\pm 1, \pm 2, \ldots, \pm (k-1)\}$; such a flow is
called a \textit{nowhere-zero $k$-flow}. In 1983, Bouchet
\cite{Bouchet} conjectured that every signed graph that admits
a nowhere-zero integer flow has a nowhere-zero $6$-flow.
Although various approximations of this conjecture have been
proved \cite{devos, RaspaudZhu, WeiTang, XuZhang,Zyka}, this
conjecture remains open.

Signed graphs that admit a nowhere-zero integer flow are called
\textit{flow-admissible}. In contrast to unsigned graphs,
describing flow-admissible is not immediate. For this purpose
we need the notions of a balance of a signed graph and that of
a signed circuit.

A circuit of a signed graph $G$ is called \textit{balanced} if
it contains even number of negative edges, otherwise it is
called \textit{unbalanced}. A signed graph itself is called
\textit{balanced} if it does not contain any unbalanced
circuit, and is called \textit{unbalanced} if it does. The
collection of all balanced circuits is the most fundamental
characteristic of a signed graph: signed graphs having the same
underlying graphs and the same sets of balanced circuits are
considered to be \textit{identical}, irrespectively of their
actual signatures.

A \textit{signed circuit} of a signed graph is a subgraph of
any of the following three types:
\begin{itemize}
\item[(1)] a balanced circuit,
\item[(2)] the union of two disjoint unbalanced circuits
    with a path that meets the circuits only at its ends,
    or
\item[(3)] the union of two unbalanced circuits that meet
    at a single vertex.
\end{itemize}
A signed circuit falling under item (2) or (3) is called an
\textit{unbalanced bicircuit}. Note that  a bicircuit from item
(3) can be regarded as as special case of the one from (2),
with the connecting path being trivial. Observe, however, that
signed circuits from items (1) or (3) admit a nowhere-zero
$2$-flow while those under item (2) admit a nowhere-zero
$3$-flow, but not a $2$-flow.

The following result is due to Bouchet \cite{Bouchet} and
reflects the fact that signed circuits are inclusion minimal
signed graphs that admit a nowhere-zero integer flow.

\begin{theorem}\label{thm:flow-adm}
A signed graph $G$ admits a nowhere-zero integer flow if and
only if each edge of $G$ belongs to a signed circuit.
\end{theorem}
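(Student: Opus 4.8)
The plan is to isolate one building block and use it for both directions: for every signed circuit $C$ I first want a nowhere-zero integer flow $\psi_C$ whose support is exactly $E(C)$, with all values in $\{\pm 1,\pm 2\}$ (and in $\{\pm 1\}$ when $C$ is of type (1) or (3)). For a balanced circuit this is just its characteristic flow. For an unbalanced bicircuit $C_1\cup P\cup C_2$ with $P$ joining $z_1\in V(C_1)$ to $z_2\in V(C_2)$, I would use the standard fact that an unbalanced circuit carries, for a suitable bidirection, the constant value $1$ obeying Kirchhoff's law at every vertex except one prescribed vertex, where it has net excess $\pm 2$; putting such flows on $C_1$ (excess $+2$ at $z_1$) and $C_2$ (excess $-2$ at $z_2$) and routing the value $2$ along $P$ from $z_1$ to $z_2$ produces $\psi_C$; when $P$ is trivial (type (3)) the two excesses cancel at the common vertex and no path is needed.

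For the ``if'' direction, assume every edge of $G$ lies on a signed circuit and fix a finite family $C_1,\dots,C_m$ of signed circuits covering $E(G)$. Fix one orientation of $G$, rewrite each $\psi_{C_i}$ with respect to it, extend it by $0$ off $E(C_i)$, and set $\phi=\sum_{i=1}^{m}3^{\,i-1}\psi_{C_i}$, which is an integer flow because each summand is. For an edge $e$, let $j$ be the largest index with $e\in E(C_j)$; since $|\psi_{C_i}(e)|\le 2$ for all $i$ while $|\psi_{C_j}(e)|\ge 1$,
$$|\phi(e)|\ \ge\ 3^{\,j-1}-2\sum_{i=1}^{j-1}3^{\,i-1}\ =\ 3^{\,j-1}-(3^{\,j-1}-1)\ =\ 1,$$
so $\phi$ is nowhere-zero.

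For the ``only if'' direction, let $\phi$ be a nowhere-zero integer flow on $G$ and fix $e_0\in E(G)$. Among the (nonempty) family of nonzero integer flows on $G$ whose support contains $e_0$, choose $\psi$ with inclusion-minimal support and put $H=\supp\psi$; I claim $H$ is a signed circuit containing $e_0$, which finishes the proof. First, $H$ has no vertex of degree $1$ (Kirchhoff's law would force $\psi$ to vanish on the incident edge) and is connected (the component carrying $e_0$ supports a restriction of $\psi$, so equals $H$ by minimality). If $H$ contains a balanced circuit $C$, then either $e_0\in E(C)$ and minimality forces $H=C$ via the characteristic flow $\chi_C$, or $e_0\notin E(C)$ and subtracting a suitable integer multiple of $\chi_C$ from $\psi$ annihilates an edge of $C$ while keeping $e_0$ in the support, contradicting minimality. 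So we may assume $H$ has no balanced circuit; then every block of $H$ is a bridge or a single circuit (a $2$-connected graph other than a circuit contains a theta subgraph, and a theta subgraph contains a balanced circuit by pigeonhole on the parities of its three paths), each such circuit is unbalanced, and since an unbalanced circuit alone has no nowhere-zero flow, $H$ is not a single circuit, so its block--cut tree has at least two leaf blocks, all unbalanced circuits. A short case analysis on the block of $H$ containing $e_0$ --- if it is a bridge, choose unbalanced circuits $C_1,C_2$ on the two sides of that bridge (each side, having at most one vertex of degree less than $2$, contains a circuit); if it is an unbalanced circuit, take it as $C_1$ and any other leaf-block circuit as $C_2$ --- yields, together with a shortest path between $C_1$ and $C_2$, a signed circuit $S\subseteq H$ of type (2) or (3) with $e_0\in E(S)$. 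Then $\psi_S$ is a nonzero integer flow on $G$ with support $E(S)\subseteq E(H)$ containing $e_0$, so minimality gives $H=S$.

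The constructions of $\psi_C$ and the telescoping estimate are routine; the real content is the ``only if'' direction, and the point I expect to need the most care is guaranteeing that the signed circuit pulled out of $H$ actually runs through the prescribed edge $e_0$ --- this is what forces the split according to whether $e_0$ lies on a bridge or a circuit block of $H$, and it rests on the structural observation that a signed graph without balanced circuits has every block equal to a bridge or an unbalanced circuit. (One could instead derive the ``only if'' direction immediately from the main theorem of the paper: writing $\phi=\sum_C n_C\chi_C$ as a positive combination of characteristic flows of consistently directed signed circuits, any edge of the nowhere-zero $\phi$ must lie in the support of some $\chi_C$, hence on a signed circuit.)
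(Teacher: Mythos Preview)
Your argument is correct. The ``if'' direction is essentially the paper's own proof of the Corollary following Theorem~\ref{thm:decomp}: a weighted sum of the (integer) characteristic flows with geometrically growing weights, your base~$3$ replacing the paper's $2^{2i-1}$. The ``only if'' direction, however, is genuinely different. The paper does not argue directly; it cites Bouchet for Theorem~\ref{thm:flow-adm} and then, after proving the decomposition Theorem~\ref{thm:decomp}, recovers the forward implication in one line: if $\phi=\sum_{C\in\mathcal{C}} n_C\chi_C$ with all $n_C>0$, every edge in $\supp(\phi)$ lies in some $C\in\mathcal{C}$. Your route is more elementary and self-contained: you pick a nonzero integer flow of inclusion-minimal support through the given edge, show that the absence of balanced circuits forces every block of that support to be a bridge or an unbalanced circuit (via the theta/pigeonhole observation), and then assemble a signed circuit through~$e_0$ by hand. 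This avoids the full machinery of Theorem~\ref{thm:decomp} and in fact yields the extra structural statement that any inclusion-minimal flow support is itself a signed circuit. The trade-off is length: once Theorem~\ref{thm:decomp} is available, the paper's derivation is immediate, whereas your direct argument needs the block analysis and the case split on whether $e_0$ lies on a bridge or on a circuit block. You already note this alternative at the end of your proposal; both approaches are valid, and yours makes Theorem~\ref{thm:flow-adm} logically independent of the decomposition theorem.
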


\section{Decomposition into characteristic flows}

Let $\phi$ be a flow on a signed graph $G$. If we reverse the
orientation of any edge $e$ and replace the value $\phi(e)$
with $-\phi(e)$, the resulting valuation will again be a flow.
We regard this operation as a way of expressing the same flow
$\phi$ in terms of a different orientation. Thus, within a
given signature, we may choose a compatible orientation
arbitrarily. If $\phi$ is an integer flow, we can always find
an orientation for $G$ such that $\phi(e)\ge 0$ for each edge
$e$. We call this orientation a \textit{positive orientation}
of $G$ with respect to $\phi$. If $\phi$ is nowhere-zero, this
orientation is unique.

Another useful operation that preserves flows on a signed graph
is known as \textit{switching}. It consist in choosing a vertex
$v$ of $G$, reversing the orientation of each half-edge
incident with $v$, and changing the signature of $G$
accordingly. The result is an identical signed graph, because
the total sign of every circuit has not been changed, equipped
with a new compatible orientation. If $G$ carries a flow, then
the same function works as a flow for the new signature and
orientation. The same flow is thus again expressed in terms of
a different orientation and signature. By a repeated use of
switching we may turn a given signature into any other
equivalent signature~\cite{Zaslav}, keeping the flow invariant.

For a fixed orientation of $G$, the sum $\phi+\psi$ of two
flows $\phi$ and $\psi$ is defined by setting
$(\phi+\psi)(e)=\phi(e)+\phi(e)$; clearly,  $\phi+\psi$ is
again a flow. We are now interested in the reverse process of
expressing an arbitrary integer flow as a sum of suitably
chosen elementary flows. The question whether this is possible
for every flow on an arbitrary flow-admissible signed graph was
posed by Andr\'e Raspaud (personal communication) referring to
a result of Tutte \cite[6.2]{T} for unsigned graphs. In Tutte's
theorem, elementary flows are represented by characteristic
flows of circuits. Theorem~\ref{thm:flow-adm} suggests that in
the case of flows on signed graph circuits should be replaced
with signed circuits.

Consider a pair of adjacent edges $e$ and $f$ sharing a vertex
$v$ in a bidirected signed graph. We say that the walk $ef$ is
\textit{consistently directed} at $v$ if exactly one of the two
half-edges incident with $v$ is directed to~$v$. A path or a
balanced circuit is said to be \textit{consistently directed}
if all pairs of consecutive edges are consistently directed. An
unbalanced circuit is \textit{consistently directed} if it has
a single vertex, called the \textit{faulty vertex}, such that
all pairs of consecutive edges are consistently directed except
for the pair sharing the faulty vertex. Finally, an unbalanced
bicircuit is said to be \textit{consistently directed} if every
pair of adjacent edges in the bicircuit is consistently
directed except for the two edges of either circuit that share
an end-vertex of the connecting path. These vertices are the
\textit{faulty vertices} of the bicircuit. Examples of
consistently directed signed circuits are displayed in
\textbf{Fig.~\ref{fig:signedcircs}}.

\begin{figure}[htbp]
  \centerline{
     \scalebox{0.45}{
       \input{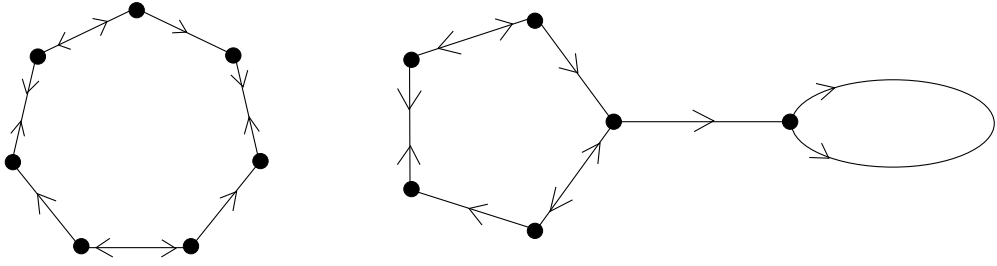}
     }
  }
\caption{Consistently directed signed circuits} \label{fig:signedcircs}
\end{figure}

It is easy to see that any signed circuit in a bidirected
signed graph can be turned into a consistently directed signed
circuit by only reversing the orientation edges. Furthermore,
if the signature is switched at some vertex, consistency of
orientation is not affected. Hence, any signed circuit may have
several different consistent orientations.

\medskip\noindent\textbf{Definition.}
Let $G$ be a signed graph and let $C$ be a signed circuit of
$G$ endowed with a consistent orientation. The
\textit{characteristic flow} of $C$ in $G$ is a function
$\chi_C\colon E(G)\to\{0, 1/2, 1\}$ defined as follows. If $C$
is a balanced circuit, we set $\chi_C(e)=1$ for each edge of
$S$ and $\chi_S(e)=0$ otherwise. If $C$ is an unbalanced
bicircuit, we set $\chi_C(e)=1$ whenever $e$ belongs to the
connecting path of~$C$, $\chi_C(e)=1/2$ whenever $e$ belongs to
a circuit of~$C$, and $\chi_C(e)=0$ otherwise.

\medskip

Note that the characteristic flow $\chi_C$ of a signed circuit
$C$ is a flow although not necessarily an integer flow. The
values of $\chi_C$ and, in fact, the values of an arbitrary
linear combination of characteristic flows of signed circuits
of a graph over integers will be contained in the cyclic
subgroup $H\le\mathbb{Q}$ generated by the element~$1/2$. The
group $H$ includes the group of integers as a subgroup of
index~$2$; the elements of $H-\mathbb{Z}$ will be called
\textit{fractional}.

Since a flow on a signed graph is invariant under the
orientation reversal and vertex-switching, the concept of a
characteristic flow applies to any signed circuit
irrespectively of its particular orientation and signature.
Furthermore, it is easy to see that, up to equivalence, the
characteristic flow of a signed circuit is uniquely determined
by the value on a single bidirected edge, which may be either
$+1$ or $-1$. However, switching at all vertices reverses the
orientation of each edge without changing the flow values. This
implies that, up to equivalence, every flow $\phi$ on a signed
graph coincides with its opposite $-\phi$. In particular, each
signed circuit has exactly one characteristic flow, up to
equivalence.

\medskip

We proceed to the main result, the decomposition theorem. For
the proof recall that the \textit{support} of a flow $\phi$,
denoted by $\supp(\phi)$, is the set of all edges $e$ for which
$\phi(e)\ne 0$.

\begin{theorem}\label{thm:decomp}
Let $\phi$ be an integer flow on a signed graph $G$. Then there
exists a set $\mathcal{C}$ of signed circuits of $G$ which are
consistently directed with respect to a positive orientation of
$G$ and positive integers $n_C$, indexed by the elements of
$\mathcal{C}$, such that
$$\phi=\sum_{C\in\mathcal{C}}n_C\chi_C.$$
\end{theorem}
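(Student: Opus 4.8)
The plan is to proceed by induction on the size of the support of the integer flow $\phi$, always working with respect to a fixed positive orientation of $G$ (so that $\phi(e)\ge 0$ everywhere). If $\supp(\phi)=\emptyset$ the statement is trivial with $\mathcal{C}=\emptyset$. Otherwise I would pick an edge $e_0$ with $\phi(e_0)>0$ minimal among the positive values, and try to locate a consistently directed signed circuit $C$ through $e_0$ inside the subgraph $G[\supp(\phi)]$ such that subtracting a suitable positive integer multiple $n\chi_C$ from $\phi$ again yields a nonnegative integer flow with strictly smaller support. Since $\chi_C$ takes values in $\{0,1/2,1\}$, one must be slightly careful: subtracting $\chi_C$ once may introduce a fractional value on an edge of an unbalanced circuit of $C$, so the natural move is to subtract $2\chi_C$ at a time when $C$ is an unbalanced bicircuit (giving integer decrements $2$ on the path and $1$ on each circuit), and $\chi_C$ (with an appropriate integer multiple) when $C$ is balanced. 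Choosing $n$ to be the largest integer for which $\phi-n\cdot(\text{the relevant integer flow supported on }C)$ stays nonnegative guarantees that at least one edge of $C$ drops to zero.

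The crux is therefore the following \emph{routing lemma}: in a bidirected signed graph in which every vertex satisfies Kirchhoff's law for a nonnegative flow $\phi$, every edge of $\supp(\phi)$ lies on a consistently directed signed circuit contained in $\supp(\phi)$. To prove this I would start at $e_0$ and grow a consistently directed trail: at each vertex $v$ reached by a half-edge directed into $v$, Kirchhoff's law (with all flow values positive on $\supp(\phi)$) forces the existence of another edge at $v$ carrying positive flow whose half-edge at $v$ is directed out of $v$ in a way that is consistent with the incoming edge; here the bidirection compatibility rule for positive versus negative edges is exactly what makes ``consistently directed'' the right local condition, and a negative edge traversed ``the wrong way'' is what will eventually create a faulty vertex. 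Continuing this walk, since $G$ is finite it must revisit a vertex, and I analyse the first repeated vertex. If the closed portion of the walk is a balanced circuit, we are in case (1). If it is unbalanced, the walk cannot close up consistently at that vertex — instead I continue past it, extract a first unbalanced circuit, then a second one, and the consistently directed path between them (possibly trivial) assembles an unbalanced bicircuit of type (2) or (3); the two points where the walk enters/leaves the unbalanced circuits become the faulty vertices. One has to check parity and disjointness bookkeeping to ensure the two unbalanced circuits obtained are vertex-disjoint (or share exactly one vertex) and that the connecting path meets them only at its ends, trimming chords if necessary.

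Granting the routing lemma, the inductive step is routine: let $C$ be the signed circuit it produces inside $\supp(\phi)$ through $e_0$. If $C$ is balanced, set $\phi' = \phi - n\chi_C$ where $n=\min_{e\in C}\phi(e)\ge 1$; then $\phi'$ is a nonnegative integer flow with $|\supp(\phi')|<|\supp(\phi)|$. If $C$ is an unbalanced bicircuit, let $\psi_C=2\chi_C$, an integer flow taking value $2$ on the connecting path and $1$ on the two circuits; set $n=\min_{e\in C}\lfloor \phi(e)/\psi_C(e)\rfloor\ge 1$ (this is $\ge1$ because every edge of $C$ carries flow $\ge 1$ and $\phi$ is integral, while $\psi_C\le 2$, so in fact one should pick $n$ as the largest integer with $\phi\ge n\psi_C$ pointwise, which kills at least one edge) and put $\phi'=\phi-n\psi_C$; again $\phi'$ is a nonnegative integer flow of smaller support. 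By induction $\phi'=\sum_{C'\in\mathcal{C}'} n_{C'}\chi_{C'}$, and adjoining $C$ with multiplicity $n$ (recorded as coefficient $n$ of $\chi_C$ when $C$ is balanced, or $2n$ when $C$ is an unbalanced bicircuit, since $\psi_C=2\chi_C$) finishes the proof. The positive orientation is fixed once and for all at the start, and the routing lemma produces circuits consistently directed \emph{with respect to that orientation}, so the consistency requirement in the statement is met automatically. The main obstacle I anticipate is entirely within the routing lemma — specifically, showing that when the growing consistently directed walk fails to close into a balanced circuit one can always harvest from it a genuine unbalanced bicircuit (disjointness of the two unbalanced circuits and the ``meets only at ends'' condition for the path), rather than some more degenerate configuration; careful choice of ``first repetition'' and minimal subwalks should handle this, but it is the delicate combinatorial heart of the argument.
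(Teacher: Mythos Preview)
Your inductive scheme breaks at the unbalanced-bicircuit step. You want to subtract $\psi_C=2\chi_C$ so as to stay in the integers, and you assert that $n=\min_{e\in C}\lfloor\phi(e)/\psi_C(e)\rfloor\ge 1$ ``because every edge of $C$ carries flow $\ge 1$ and $\psi_C\le 2$''. That implication is false: on an edge of the connecting path $\psi_C(e)=2$, so if $\phi(e)=1$ there you get $\lfloor 1/2\rfloor=0$ and you cannot subtract even once. This is not a corner case you can route around. Take the signed graph on two vertices $u,v$ with two positive parallel edges $e_1,e_2$ and a negative loop at each vertex; it carries a nowhere-zero $2$-flow $\phi$ with $\phi\equiv 1$. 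In the positive orientation the digon $e_1e_2$ is \emph{not} consistently directed (both half-edges at $u$ point the same way), so there is no consistently directed balanced circuit, and every unbalanced bicircuit has a parallel edge as its connecting path, where $\phi=1<2$. Your procedure is stuck. Indeed the only decomposition here is $\phi=\chi_{U_1}+\chi_{U_2}$ with two distinct bicircuits and \emph{odd} coefficients, so no scheme that only ever subtracts even multiples of $\chi_C$ for bicircuits can succeed. (There is a second, smaller issue: even when $n\ge 1$, choosing $n$ maximal with $\phi\ge n\psi_C$ need not kill an edge, so ``support strictly decreases'' also fails; e.g.\ $\phi\equiv 3$ on a bicircuit gives $n=1$ and residual values $1$ on the path, $2$ on the circuits.)

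The paper's proof confronts exactly this obstruction by \emph{abandoning} integrality of the intermediate flows. After exhausting all consistently directed balanced circuits, it subtracts a single $\chi_U$ for one bicircuit $U$, accepting that the resulting flow has fractional values on precisely the two unbalanced circuits of $U$. The real work is then an invariant-maintenance argument: at each subsequent step the current flow has no consistently directed balanced circuit in its support and its fractional values sit on exactly two edge-disjoint consistently directed unbalanced circuits $D,D'$; one grows a consistently directed trail out of the faulty vertex of $D$ and, through a four-case analysis of where that trail first lands (back on itself, on $D'-d'$, on $d'$ consistently, or on $d'$ inconsistently), produces a bicircuit $U'$ so that subtracting $\chi_{U'}$ either restores integrality or again leaves exactly two fractional unbalanced circuits. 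Your routing lemma is essentially the first half of this (finding one bicircuit), but the mechanism that lets odd coefficients appear---pairing up half-integer residues across successive bicircuits---is missing from your plan and cannot be replaced by doubling.
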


\noindent\textbf{Remark.} A natural question arises whether for
this theorem to be true the fractional values in the definition
of a characteristic flow are really necessary. The answer is,
unfortunately, `yes'. To see this, let us consider the signed
graph $G$ consisting of two vertices joined by a pair of
positive parallel edges with a negative loop attached at either
vertex. It is easy to see that $G$ admits a nowhere-zero
$2$-flow, but any decomposition of this flow into the sum of
characteristic flows will contain characteristic flows of two
distinct unbalanced bicircuits, each with coefficient $1$. The
reader can easily extend this example into an infinite series
of similar examples where a nowhere-zero $2$-flow only
decomposes into the sum of characteristic flows of two distinct
unbalanced bicircuits, each with coefficient~$1$.

\medskip\noindent\textit{Proof of Theorem~\ref{thm:decomp}.}
Throughout the proof we keep fixed a positive orientation of
$G$ with respect to $\phi$. The proof is trivial if $\phi=0$,
so we may assume that $\supp(\phi)\ne\emptyset$. If
$\supp(\phi)$ contains a consistently directed balanced circuit
$B$, we form the flow $\phi-\chi_B$ which is again an integer
flow on~$G$. We repeat the procedure with the flow
$\phi-\chi_B$ and continue as long as the support of the
current flow contains a consistently directed balanced circuit.
Eventually we obtain a set $\mathcal{B}$ of consistently
directed balanced circuits and the flow
$\phi_1=\phi-\sum_{B\in\mathcal{B}}\chi_B$. If $\phi_1=0$, then
$\phi=\sum_{B\in\mathcal{B}}\chi_B$, and the required
expression for $\phi$ follows immediately. Otherwise
$\supp(\phi_1)$ is nonempty and induces a subgraph $G_1$ which
carries a non-null integer flow $\phi_1$. Clearly, $G_1$
contains no consistently directed balanced circuit.

Our next aim is to show that $G_1$ contains a consistently
directed unbalanced bicircuit. To this end, we first identify a
consistently directed unbalanced circuit in $G_1$. We pick an
arbitrary vertex $u_1$ of $G_1$ and choose an edge $e_1$ that
leaves $u_1$; since $G_1$ has a positive orientation such an
edge always exists. Let $u_2$ be the other end of $e_1$. At
$u_2$, there must be an edge $e_2$ such that the walk $e_1e_2$
is consistent at $u_2$. We continue in the same manner until we
reach a vertex previously visited, say $u_i$. The segment
between the two occurrences of $u_i$ is clearly a circuit $D$
of~$G_1$. By the construction, $D$ is consistent everywhere
except possibly $u_i$. Since $G_1$ contains no consistently
directed balanced circuit, $D$ is a consistently directed
unbalanced circuit and $u_i$ is its faulty vertex.

Next we show that $D$ is contained in a directed unbalanced
bicircuit. The edges of $D$ incident with $u_i$ are either both
directed to $u_i$ or both from $u_i$. Since $G_1$ has a
positive orientation, there is an edge $f_1$ incident with
$u_i$ which is consistent at $u_i$ with any of the two edges of
$D$ incident with $u_i$. Set $v_1=u_i$ and let $v_2$ denote the
other end of~$f_1$. At $v_2$, there must be an edge $f_2$ such
that the walk $f_1f_2$ is consistent at $v_2$. Again, we
continue similarly until we reach a vertex $v_j$ that either
belongs to $D$ or coincides with a vertex $v_m$ with $m<j$.
Observe that $v_j$ does not lie on $D-v_1$, for if it does, we
can divide $D$ into two $v_j$-$v_1$-segments $D_1$ and $D_2$
exactly one of which is consistently directed with $f_{j-1}$
at~$v_j$. But then one of $f_1f_2\ldots f_{j-1}D_1$ or
$f_1f_2\ldots f_{j-1}D_2$ is a consistently directed balanced
circuit, a contradiction (see \textbf{Fig.~\ref{fig:pripad}}
for illustration). It follows that $v_j=v_m$ for some $m<j$. In
this case, however, $D'=f_mf_{m+1}\ldots f_{j-1}$ is a
consistently directed unbalanced circuit which together with
$D$ and the path $f_1f_2\ldots f_{m-1}$ forms a consistently
directed unbalanced bicircuit $U$. It may happen that $u_m=v_i$
in which case the connecting path of the bicircuit is trivial.

\begin{figure}[htbp]
  \centerline{
     \scalebox{0.45}{
       \input{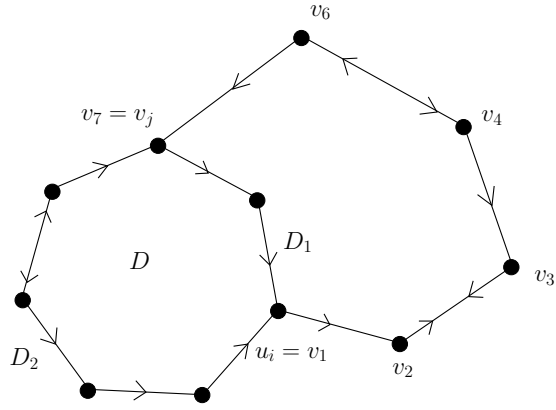}
     }
  }
\caption{Contradiction in constructing an unbalanced bicircuit} \label{fig:pripad}
\end{figure}

We now take the characteristic flow $\chi_U$, construct the
flow $\phi_2=\phi_1-\chi_U$, and set $G_2=\supp(\phi_2)$. Note
that $\phi_2$ is not an integer flow anymore, but its
fractional values are confined to two edge-disjoint
consistently directed unbalanced circuits of $G_2$.
Furthermore, $G_2$ contains no consistently directed balanced
circuit because such a circuit would also be contained in
$G_1$, which is impossible. The next step is to show that $G_2$
contains a consistently directed unbalanced bicircuit $U'$ such
that the support of the flow $\phi_3=\phi_2-\chi_{U'}$ has
either none or exactly two edge-disjoint consistently directed
unbalanced circuits carrying fractional values. Since neither
$\supp(\phi_2)$ nor $\supp(\phi_3)$ contain consistently
directed balanced circuits, repeating this procedure will
necessarily terminate with a set $\mathcal{U}$ of consistently
directed unbalanced bicircuits such that
$\phi_1-\sum_{U\in\mathcal{U}}\chi_U=0$, implying that
$$\phi=\sum_{B\in\mathcal{B}}\chi_B+\sum_{U\in\mathcal{U}}\chi_U.$$
Since the latter expression immediately yields the statement of
the theorem, all that remains is to describe a procedure that
starts with a flow $\psi$ on $G$ such that
\begin{itemize}
\item[(1)] $\supp(\psi)$ contains no consistently directed
    balanced circuit, and
\item[(2)] fractional values of $\psi$ occur in exactly two
    edge-disjoint unbalanced circuits of $\supp(\psi)$.
\end{itemize}
and constructs an unbalanced bicircuit $U'\subseteq
\supp(\psi)$ such that in $\supp(\psi-\chi_{U'})$ there are
either none or exactly two edge-disjoint consistently directed
unbalanced circuits carrying fractional values.

Let $\psi$ be a flow on $G$ such that $\supp(\psi)$ satisfies
(1) and (2) stated above. Let $d$ and $d'$ be the faulty
vertices of $D$ and $D'$, respectively. To construct an
unbalanced bicircuit $U'$ let us take the circuit $D$, choose
an edge $e$ incident with $d$ which is consistently directed
with either of the two adjacent edges of $D$ and proceed by
successively constructing a consistently directed trail~$T$
until we either reach a previously encountered vertex of $T$ or
a vertex of $D\cup D'$. Let $t$ be the terminal vertex of $T$.

First observe that $t$ cannot belong to $D-d$. Indeed,
otherwise we could split $D$ into two $t$-$d$-segments $D_1$
and $D_2$ producing circuits $TD_1$ and $TD_2$ one of which
would be a consistently directed balanced circuit in
$\supp(\psi)$, contradicting (2).

There remain four possibilities for the position of $t$ to
consider.

\medskip\noindent
\textbf{Case 1.} \textit{The vertex $t$ coincides with a
previously encountered vertex of $T$, possibly $t=d$.} It
follows that the portion of $T$ between the two occurrences of
$t$ forms a consistently directed unbalanced circuit, say
$D''$, and thus $D\cup T$ forms a consistently directed
unbalanced bicircuit, the sought $U'$. Indeed, $D'$ and $D''$
are edge-disjoint and $(D\cup D')\cap (D\cup D'')=D$, hence the
support of the flow $\psi-\chi_{U'}$ again contains precisely
two directed unbalanced circuits carrying fractional values,
namely $D'$ and $D''$.

\medskip\noindent
\textbf{Case 2.} \textit{The vertex $t$ belongs to $D'-d'$.}
Let $D'_1$ and $D'_2$ denote the two $t$-$d'$-segments of~$D'$.
Then exactly one of  $TD'_1$ and $TD'_2$, say $TD'_1$, is a
consistently directed $d$-$d'$-path. Starting from $d'$
construct a consistently directed trail $T'$ whose first edge
is consistently directed with either of the two adjacent edges
of $D'$ and continue until we either reach a previously visited
vertex of $T'$ or a vertex of $D\cup D'\cup T$. Let $t'$ be the
first such vertex.

Observe that $t'$ belongs neither to $D-d$ not to $D'-d'$.
Otherwise, in the former case $D$ would contain a
$t'$-$d$-segment $S$ such that $STD'_1T'$ is a consistently
directed balanced circuit in $\supp(\psi)$, and similarly in
the latter case $D'$ would contain a $t'$-$d'$-segment $S'$
such that $S'T'$ is a a consistently directed balanced circuit
in $\supp(\psi)$. In both cases we would get a contradiction.

There remain two possibilities for the position of $t'$.

\medskip\noindent
\textbf{Subcase 2.1.}  \textit{The vertex $t'$ coincides with a
previously encountered vertex of $T'$, possibly $t'=d'$.} The
portion of $T'$ between the two occurrences of $t'$ forms a
consistently directed unbalanced circuit, say $D''$, and hence
$D'\cup T'$ is a consistently directed unbalanced bicircuit. If
we set $U'=D'\cup T'$, then the support of the flow
$\psi-\chi_{U'}$ again contains precisely two directed
unbalanced circuits carrying fractional values, namely $D$ and
$D''$.

\medskip\noindent
\textbf{Subcase 2.2.} \textit{The vertex $t'$  belongs to $T$,
possibly $t'=d$.} It is obvious that $t'\ne t$ because
otherwise $t'$ would lie in $D'-d'$, which we have shown to be
impossible. On the other hand, $t'$ may coincide with~$d$. The
vertex $t'$ splits the path $T$ into two segments, a
$d$-$t'$-segment $W_1$, which may be trivial, and a
$t'$-$t$-segment $W_2$, which is nontrivial. Consider the last
edge $g$ of $T'$ and the first edge $h$ of the segment $W_2$.
If $g$ was consistently directed with $h$ at~$t'$, then
$W_2D_1'T'$ would be a consistently directed balanced circuit
within $\supp(\psi)$, which is impossible. Thus $g$ is not
consistently directed with $h$ at $t'$. Since $h$ is
consistently directed at $t'$ with the last edge of $W_1$,
provided that $t'\ne d$, or with both edges of $D$ incident
with~$d$, provided that $t'=d$, it follows that $W_1\cup T'$ is
a consistently directed path whose ends $d$ and $d'$ are the
only faulty vertices of $D\cup W_1\cup T'\cup D'$. Thus  $D\cup
W_1\cup T'\cup D'$ is a consistently directed unbalanced
bicircuit, the sought $U'$. It is now easy to see that
$\psi-\chi_{U'}$ is an integer flow whose support does not
contain any consistently balanced circuit.

\medskip\noindent
\textbf{Case 3.} \textit{The vertex $t$ coincides with $d'$ and
the terminal edge of $T$ is consistently directed at $d'$ with
either of the two adjacent edges of $D'$.} In this case $D\cup
T\cup D'$ is a consistently directed unbalanced bicircuit. We
can set $U'=D\cup T\cup D'$ and observe that $\psi-\chi_{U'}$
is an integer flow whose support contains no consistently
directed balanced circuit. Again, the required conclusion
holds.

\medskip\noindent
\textbf{Case 4.} \textit{The vertex $t$ coincides with $d'$ but
the terminal edge of $T$ is not consistently directed at $d'$
with the two adjacent edges of $D'$.} By the Kirchhoff law,
there exists an edge in $\supp(\psi)$ incident with $d'$ which
is consistently directed at $d'$ with both adjacent edges of
$D'$. Hence, starting from this edge we can again construct a
consistently directed trail $T'$ which terminates by reaching
either a previously visited vertex of $T'$ or a vertex of
$D\cup D'\cup T$. As in Case~2, the terminal vertex $t'$ cannot
lie in $(D-d)\cup(D'-d')$ for otherwise we could find a
consistently balanced circuit in $\supp(\psi)$,
contradicting~(2). There remain two possibilities for the
position of $t'$ which are completely analogous to Subcases~2.1
and 2.2, and are therefore left to the reader.

\medskip

As we have seen, in each case the procedure can either be
continued or will terminate with the zero flow. The proof is
complete. \hfill $\Box$

\medskip

Observe that the proof of Theorem~\ref{thm:decomp} makes no use
of the characterisation of flow-admissible graphs given in
Theorem~\ref{thm:flow-adm}. Just on the contrary,
Theorem~\ref{thm:flow-adm} easily follows from
Theorem~\ref{thm:decomp}.

\begin{corollary}
A signed graph $G$ admits a nowhere-zero integer flow if and
only if each edge of $G$ belongs to a balanced circuit or an
unbalanced bicircuit.
\end{corollary}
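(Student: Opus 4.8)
The plan is to prove the two implications separately. The forward implication is an immediate consequence of Theorem~\ref{thm:decomp}, while the backward one is a routine gluing argument. Since the phrase ``balanced circuit or unbalanced bicircuit'' is just a signed circuit spelled out (type~(1) versus types~(2) and (3)), the statement coincides with Theorem~\ref{thm:flow-adm}, so in effect this re-derives Bouchet's characterisation from Theorem~\ref{thm:decomp}.

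Suppose first that $G$ carries a nowhere-zero integer flow $\phi$. Fix the positive orientation of $G$ with respect to $\phi$, which is unique because $\phi$ is nowhere-zero; thus $\phi(e)>0$ for every edge $e$. Applying Theorem~\ref{thm:decomp} we obtain a set $\mathcal{C}$ of signed circuits, consistently directed with respect to this orientation, together with positive integers $n_C$ such that $\phi=\sum_{C\in\mathcal{C}}n_C\chi_C$. In the positive orientation every $\chi_C$ is nonnegative, so for a fixed edge $e$ the identity $\phi(e)=\sum_{C\in\mathcal{C}}n_C\chi_C(e)$ has a strictly positive left side and a sum of nonnegative terms on the right; hence some term $n_C\chi_C(e)$ is positive. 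As $n_C>0$, this forces $\chi_C(e)\ne 0$, i.e.\ $e$ lies on the signed circuit $C$, which is a balanced circuit or an unbalanced bicircuit.

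For the converse, assume that each edge of $G$ lies on a balanced circuit or an unbalanced bicircuit, hence on some signed circuit. Enumerate $E(G)=\{e_1,\dots,e_m\}$ and for each $i$ choose a signed circuit $C_i$ containing $e_i$. Then $\phi_i=2\chi_{C_i}$ is an integer flow on $G$ (with respect to any fixed orientation its values lie in $\{0,\pm 1,\pm 2\}$) and $\phi_i(e_i)\ne 0$; alternatively one may invoke the observation from Section~2 that every signed circuit admits a nowhere-zero integer flow. Put $\psi=\sum_{i=1}^m N^i\phi_i$ for an integer $N$ to be chosen, which is again an integer flow. Fix $i$ and let $k$ be the largest index with $\phi_k(e_i)\ne 0$, so $k\ge i$; then $N^k\phi_k(e_i)$ has absolute value at least $N^k$, while the remaining terms, all of index below $k$, contribute at most $2\sum_{j<k}N^j<2N^k/(N-1)$. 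Choosing $N=4$ makes the first quantity strictly larger, so $\psi(e_i)\ne 0$, and since $i$ was arbitrary, $\psi$ is a nowhere-zero integer flow on $G$.

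The only step that requires care is this last one: written in a common orientation the characteristic flows $\chi_{C_i}$ may take values of both signs, so a plain sum $\sum_i\phi_i$ could vanish on some edge; the geometric weights $N^i$ with $N$ large are precisely what rules this out, and the supporting estimate is elementary. The forward direction needs no such precaution, being a direct reading of Theorem~\ref{thm:decomp}.
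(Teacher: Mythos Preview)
Your proof is correct and follows essentially the same approach as the paper: the forward direction is read off directly from Theorem~\ref{thm:decomp}, and the converse builds a nowhere-zero integer flow as a geometrically weighted sum of characteristic flows of covering signed circuits (the paper writes this as $\psi=\sum_{i=1}^{r}2^{2i-1}\chi_{C_i}$, which is exactly your $2\cdot 4^{i}\chi_{C_i}$ up to an index shift, and it leaves the nonvanishing estimate implicit where you spell it out).
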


\begin{proof}
The forward implication is an immediate consequence of
Theorem~\ref{thm:decomp}. For the converse,  let
$\mathcal{C}=\{C_1,C_2, \ldots, C_r\}$ be a set of signed
circuits such that each edge of $G$ belongs to a member of
$\mathcal{C}$. We first fix an arbitrary orientation of $G$;
note that the elements of $\mathcal{C}$ need not be
consistently directed with respect to this orientation. Now we
can define the function $\psi\colon E(G)\to\mathbb{Z}$ by
setting
$$\psi=\sum_{i=1}^{r}2^{2i-1}\chi_{C_i}.$$ It is easy to see that
$\psi$ is indeed a nowhere-zero integer flow on $G$.
\end{proof}

\section{Application to signed circuit covers}

A circuit cover of an unsigned graph is  a collection of
circuits such that each edge of the graph belongs to at least
one of the circuits. It is a standard problem to find, for a
given graph, a circuit cover of minimum total length. It has
been conjectured that every bridgeless graph $G$ has a circuit
cover of length at most $7|E(G)|/5$ (Jaeger, private
communication; independently \cite{AT}), but the best current
general bound is $5|E(G)|/3$ (see \cite{AT,BJJ}). The
$7/5$-conjecture is particularly interesting for its
relationship to other prominent conjectures in graph theory.
For instance, its validity is implied by the Petersen flow
conjecture (alternatively known as the Petersen colouring
conjecture) of Jaeger \cite[Section~7]{jaeger}, while the
conjecture itself implies the celebrated cycle double cover
conjecture (see Raspaud \cite{raspaud-diss} and Jamshy and
Tarsi \cite{jamshy}).

A natural analogue of a circuit cover for signed graphs is the
concept of a \textit{signed circuit cover} introduced in
\cite{MRRS}. It is a collection $\mathcal{C}$ of signed
circuits of a signed graph $G$ such that each edge of $G$ is
contained in at least one member of $\mathcal{C}$. In
\cite{MRRS} it was shown that every flow-admissible signed
graph $G$ has a signed circuit cover of total length at most
$11|E(G)|$.

We now apply our Theorem~\ref{thm:decomp} to the shortest
signed circuit problem. If a signed graph $G$ admits a
nowhere-zero integer $k$-flow $\phi$, we can decompose it into
the sum $\sum_{C\in\mathcal{C}}n_C\chi_C$ of characteristic
flows guaranteed by Theorem~\ref{thm:decomp}. Obviously, the
set $\mathcal{C}$ provides a signed circuit cover of $G$. This
cover yields the following bounds.

\begin{corollary}
If a signed graph $G$ has a nowhere-zero flow $\phi$, then $G$
has a signed circuit cover such that each edge $e$ belongs to
at most $2|\phi(e)|$ signed circuits.
\end{corollary}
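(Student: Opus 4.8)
The plan is to read the cover off the decomposition provided by Theorem~\ref{thm:decomp}. First I would fix a positive orientation of $G$ with respect to $\phi$, which exists because $\phi$ is an integer flow, and apply Theorem~\ref{thm:decomp} to obtain a set $\mathcal{C}$ of signed circuits, consistently directed with respect to this orientation, together with positive integers $n_C$ (for $C\in\mathcal{C}$) such that $\phi=\sum_{C\in\mathcal{C}}n_C\chi_C$. The point of insisting on the positive orientation is that it turns this into a sum of non-negative terms: each characteristic flow $\chi_C$ takes only the values $0$, $1/2$, $1$, each coefficient $n_C$ is a positive integer, and $\phi(e)\ge 0$ for every edge $e$. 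Since $\phi$ is moreover nowhere-zero, $\phi(e)>0$, and hence $\phi(e)=|\phi(e)|$, for every edge $e$.

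Next I would check that $\mathcal{C}$ is indeed a signed circuit cover. Given an edge $e$, the equality $|\phi(e)|=\sum_{C\in\mathcal{C}}n_C\chi_C(e)$ has a strictly positive left-hand side, so at least one summand $n_C\chi_C(e)$ is positive; as $n_C>0$, this forces $\chi_C(e)>0$, so $e$ lies on the signed circuit $C$. Thus every edge belongs to a member of $\mathcal{C}$.

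For the length bound, fix an edge $e$ and consider the members of $\mathcal{C}$ passing through $e$. By the definition of a characteristic flow, if $e$ lies on a signed circuit $C$ then $\chi_C(e)\in\{1/2,1\}$, so $\chi_C(e)\ge 1/2$; together with $n_C\ge 1$ this gives $2n_C\chi_C(e)\ge 1$. Summing this inequality over all $C\in\mathcal{C}$ with $e\in C$ and then enlarging the sum to the whole of $\mathcal{C}$ (the remaining terms being non-negative), I obtain
$$\bigl|\{C\in\mathcal{C}: e\in C\}\bigr| \le \sum_{C\in\mathcal{C},\, e\in C} 2n_C\chi_C(e) \le \sum_{C\in\mathcal{C}} 2n_C\chi_C(e) = 2\phi(e) = 2|\phi(e)|,$$
which is the claimed bound.

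I do not expect any genuine obstacle here; the statement is a short, essentially arithmetic consequence of Theorem~\ref{thm:decomp}. The only points that deserve to be stated carefully are that working with a positive orientation makes every number appearing in the decomposition non-negative, that a characteristic flow never takes a value strictly between $0$ and $1/2$, and that the nowhere-zero hypothesis is precisely what is needed to conclude both that $\mathcal{C}$ covers all edges and that $\phi(e)=|\phi(e)|$.
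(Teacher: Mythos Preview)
Your argument is correct and follows exactly the approach the paper intends: read the cover off the decomposition of Theorem~\ref{thm:decomp}, and use that under a positive orientation every term $n_C\chi_C(e)$ is non-negative with $\chi_C(e)\ge 1/2$ whenever $e\in C$. The paper itself states this corollary without a detailed proof, merely remarking that the set $\mathcal{C}$ from the decomposition ``obviously'' covers $G$ and ``yields the following bounds''; your write-up simply makes explicit the arithmetic behind that remark.
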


\begin{corollary}
If a signed graph $G$ admits a nowhere-zero $k$-flow, then it
has a signed circuit cover of total length at most
$2(k-1)|E(G)|$.
\end{corollary}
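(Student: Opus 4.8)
The plan is to derive the corollary directly from Theorem~\ref{thm:decomp} together with the preceding corollary (the one bounding, for each edge $e$, the number of signed circuits through $e$ by $2|\phi(e)|$). First I would recall the setup: if $G$ admits a nowhere-zero $k$-flow, there is an integer flow $\phi$ with $1\le|\phi(e)|\le k-1$ for every edge $e$. Applying Theorem~\ref{thm:decomp} to $\phi$ produces a set $\mathcal{C}$ of consistently directed signed circuits and positive integers $n_C$ with $\phi=\sum_{C\in\mathcal{C}}n_C\chi_C$. Since $\phi$ is nowhere-zero, every edge has $\phi(e)\ne 0$, so every edge lies in the support of at least one characteristic flow $\chi_C$; hence $\mathcal{C}$ is a signed circuit cover of $G$.

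The second step is to estimate the total length $\sum_{C\in\mathcal{C}}|E(C)|$ of this cover. The natural way is to count incidences: $\sum_{C\in\mathcal{C}}|E(C)| = \sum_{e\in E(G)}|\{C\in\mathcal{C} : e\in E(C)\}|$. By the previous corollary, the inner cardinality is at most $2|\phi(e)|$ for each edge $e$ — this is exactly the content of that corollary, and I would simply invoke it rather than reprove it. Summing over all edges gives $\sum_{C\in\mathcal{C}}|E(C)| \le \sum_{e\in E(G)} 2|\phi(e)| \le \sum_{e\in E(G)} 2(k-1) = 2(k-1)|E(G)|$, using $|\phi(e)|\le k-1$ in the last inequality. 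This completes the argument.

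The bookkeeping to double-check is how the fractional coefficients $1/2$ appearing in characteristic flows of unbalanced bicircuits interact with the integrality of $\phi(e)$ and the positive integer coefficients $n_C$; but this is precisely what the previous corollary already accounts for (an edge $e$ with $|\phi(e)|=m$ may lie in up to $2m$ signed circuits because each bicircuit contributes only $1/2$ to $|\phi(e)|$ on its circuit-edges), so no new obstacle arises here. The only genuinely substantive input is Theorem~\ref{thm:decomp}; given it and the edge-wise corollary, the present corollary is a one-line incidence count, so I do not anticipate a hard step — the proof is essentially a summation.

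\begin{proof}
Let $\phi$ be a nowhere-zero $k$-flow on $G$, so that $1\le|\phi(e)|\le k-1$ for every edge $e$ of $G$. By the previous corollary, $G$ has a signed circuit cover $\mathcal{C}$ in which each edge $e$ belongs to at most $2|\phi(e)|$ members of $\mathcal{C}$. Counting edge-circuit incidences in two ways,
$$\sum_{C\in\mathcal{C}}|E(C)|=\sum_{e\in E(G)}\bigl|\{C\in\mathcal{C}: e\in E(C)\}\bigr|\le\sum_{e\in E(G)}2|\phi(e)|\le\sum_{e\in E(G)}2(k-1)=2(k-1)|E(G)|,$$
as claimed.
\end{proof}
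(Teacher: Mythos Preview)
Your proof is correct and follows exactly the intended route: the paper leaves this corollary without an explicit proof, treating it as an immediate consequence of the preceding edge-wise corollary via the incidence count you wrote down. There is nothing to add or correct.
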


Observe that if Bouchet's $6$-flow conjecture \cite{Bouchet} is
true, then the previous corollary implies that every
flow-admissible signed graph $G$ has a signed circuit cover of
total length at most $10|E(G)|$.

\bigskip{}\bigskip

\noindent\textbf{Acknowledgements.}  We acknowledge partial
support from the grants APVV-0223-10 and VEGA 1/1005/12.

\end{document}